\documentclass[11pt]{article}
\usepackage{enumitem,amsmath,amsfonts,amsthm,amssymb,fullpage}
\usepackage[hidelinks]{hyperref}
\usepackage[capitalize]{cleveref}
\usepackage[numbers,sort]{natbib}

\usepackage[mathscr]{euscript}

\newtheorem{theorem}{Theorem}
\newtheorem{lemma}[theorem]{Lemma}
\newtheorem{proposition}[theorem]{Proposition}
\newtheorem{corollary}[theorem]{Corollary}
\newtheorem{conjecture}[theorem]{Conjecture}
\theoremstyle{definition}
\newtheorem{definition}[theorem]{Definition}
\newtheorem{example}[theorem]{Example}

\crefname{equation}{equation}{equations}
\crefname{lemma}{Lemma}{Lemmas}
\crefname{proposition}{Proposition}{Propositions}
\crefname{claim}{Claim}{Claims}
\crefname{theorem}{Theorem}{Theorems}
\crefname{conjecture}{Conjecture}{Conjectures}
\crefname{figure}{Figure}{Figures}

\AddToHook{env/lemma/begin}{\crefalias{theorem}{lemma}}
\AddToHook{env/proposition/begin}{\crefalias{theorem}{proposition}}
\AddToHook{env/corollary/begin}{\crefalias{theorem}{corollary}}
\AddToHook{env/conjecture/begin}{\crefalias{theorem}{conjecture}}
\AddToHook{env/claim/begin}{\crefalias{theorem}{claim}}
\AddToHook{env/question/begin}{\crefalias{theorem}{question}} 
\AddToHook{env/definition/begin}{\crefalias{theorem}{definition}} 
\AddToHook{env/remark/begin}{\crefalias{theorem}{remark}} 
\AddToHook{env/example/begin}{\crefalias{theorem}{example}}

\DeclareMathOperator{\End}{End}
\DeclareMathOperator{\Gal}{Gal}
\DeclareMathOperator{\mdim}{maxdim}
\DeclareMathOperator{\rk}{rank}

\newcommand\ol[1]{\overline{#1}}
\newcommand\ab[1]{\lvert #1 \rvert}
\newcommand\smat[1]{\left(\begin{smallmatrix}#1\end{smallmatrix}\right)}

\let\leq\leqslant
\let\geq\geqslant

\newcommand{\F}{\mathbb{F}}
\newcommand{\C}{\mathbb{C}}
\newcommand{\Z}{\mathbb{Z}}
\newcommand{\T}{\mathscr{T}}
\newcommand{\X}{\mathscr{X}}
\newcommand{\Y}{\mathscr{Y}}
\newcommand{\M}{\mathscr{M}}
\newcommand{\R}{\mathbb{R}}
\newcommand{\Q}{\mathbb{Q}}
\newcommand\acf{\mathrm{ACF}}

\title{Duality for matrix space questions}
\author{Yuval Wigderson\thanks{IST Austria, 3400 Klosterneuburg, Austria. Email: \url{yuval.wigderson@ista.ac.at}. Research supported by Dr.\ Max R\"ossler, the Walter Haefner Foundation, and the ETH Z\"urich Foundation.}}
\date{}

\begin{document}

\maketitle
\begin{abstract}
	We present a new proof of a classical theorem of Dieudonn\'e: if a linear space of $n\times n$ matrices consists entirely of singular matrices, then its dimension is at most $n^2-n$. Our proof is based on a surprising ``duality'' argument: we prove this universal upper bound by exhibiting a \emph{single} matrix space that serves as a \emph{lower bound} for a related problem. Interestingly, this approach only works for certain fields, but we use model-theoretic arguments to obtain the same result for all fields. We hope that this approach can be generalized to provide new duality-based proofs of other classical theorems on matrix spaces, and give some preliminary results in this direction.
\end{abstract}

\section{Introduction.}
Let $F$ be a field, and let $F^{n\times n}$ denote the space of $n\times n$ matrices over $F$. $F^{n\times n}$ is a vector space over $F$, and a subspace\footnote{Throughout, we use the notation $U \leq V$ to mean that $U$ is a subspace of the vector space $V$.} $W \leq F^{n\times n}$ is called a \emph{matrix space} or \emph{space of $n\times n$ matrices over $F$}. The study of matrix spaces has rich and surprising connections to many areas of mathematics, including algebraic geometry \cite{MR954659}, algebraic topology \cite{MR179183}, graph theory \cite{MR4651017}, group theory \cite{MR1501972}, invariant theory \cite{MR29360}, matroid theory \cite{MR1129080}, quantum information theory \cite{MR4302212}, and theoretical computer science \cite{MR229540}. In most of these connections, one studies matrix spaces $W \leq F^{n\times n}$ such that every matrix in $W$ satisfies some natural linear-algebraic property.

One of the most interesting and well-studied cases concerns \emph{singular matrix spaces}, namely a space $S \leq F^{n\times n}$ such that every matrix $M \in S$ is singular (non-invertible). The study of singular matrix spaces goes back at least 75 years \cite{MR29360}, and many of the connections mentioned above actually concern singular matrix spaces. As a striking example of the complexity of this simple definition, let us remark that it is unknown how to efficiently determine if a given matrix space is singular; moreover, finding an efficient deterministic algorithm for this problem would come close\footnote{More precisely, it would prove much stronger circuit lower bounds than anything currently known.} to proving that $\mathrm P \neq \mathrm{NP}$ \cite{MR2105971,MR564634} (see also the survey \cite[Theorems 1.2 and 4.5]{MR2756166}).

At the other extreme, we might ask to study spaces of matrices all of whose elements are invertible. A moment's thought reveals that this makes no sense, since the zero matrix lies in every matrix space, and is certainly singular. However, if we exclude this trivial obstruction, we come to the definition of \emph{totally non-singular} matrix spaces, namely a space $T \leq F^{n\times n}$ such that every non-zero matrix $0 \neq M \in T$ is invertible. The structure of totally non-singular matrix spaces turns out to be quite interesting, and yields unexpected connections to other areas of mathematics; for example, determining the maximum dimension of an $n\times n$ totally non-singular space over $F=\R$ is equivalent \cite{MR179183} to determining the maximum number of non-vanishing vector fields on the sphere $S^{n-1}$, which was famously resolved by Adams using $K$-theoretic tools \cite{MR139178}. This question, in turn, is closely connected to the representation theory of Clifford algebras.

Having made these definitions, one is immediately led to the following trivial observation: if $S\leq F^{n\times n}$ is singular and $T \leq F^{n\times n}$ is totally non-singular, then
\begin{equation}\label{eq:S cap T}
	S \cap T = \{0\}.
\end{equation}
Indeed, any non-zero matrix in $S \cap T$ would need to simultaneously be invertible and non-invertible, hence cannot exist. Using the fact that $S,T \leq F^{n\times n}$, we conclude from \eqref{eq:S cap T} that
\begin{equation}\label{eq:dim sum}
	\dim S + \dim T \leq n^2,
\end{equation}
since $\dim {(F^{n\times n})}=n^2$. The goal of this paper is to show that some classical results about matrix spaces, whose proofs involve non-trivial algebraic machinery, can be derived from the extremely simple observations \eqref{eq:S cap T} and \eqref{eq:dim sum}.

\begin{example}\label{ex:2x2}
Let us consider first the case $F=\R, n=2$. It is not hard to come up with a 2-dimensional singular space of matrices; for example, we can take
\[
	S^* = \left\{ \begin{pmatrix}
	x&y\\0&0
	\end{pmatrix} : x,y \in \R \right\},
\]
which is certainly a singular matrix space as every matrix in $S^*$ has a row of zeros. It is a bit harder, but not by much, to come up with a 2-dimensional totally non-singular space of matrices in $\R^{2\times 2}$; for example, we can define
\begin{equation}\label{eq:complex 2x2}
	T^* = \left\{ \begin{pmatrix}
		x&y\\-y&x
	\end{pmatrix} :x,y \in \R \right\}.
\end{equation}
The determinant of the matrix $\smat{x&y\\-y&x}$ is $x^2+y^2$, which is strictly positive unless $x=y=0$, hence we see that $T^*$ is indeed totally non-singular. However, just from the \emph{existence} of these two simple examples, we can derive that they are the largest possible examples. 
\begin{proposition}
	The maximum dimension of a singular matrix space in $\R^{2\times 2}$ is $2$. Similarly, the maximum dimension of a totally non-singular matrix space in $\R^{2\times 2}$ is $2$.
\end{proposition}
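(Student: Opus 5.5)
The plan is to combine the two explicit examples with the dimension count \eqref{eq:dim sum}, in the duality spirit the paper has been setting up. First, the lower bounds are already in hand. $S^*$ is a singular subspace of $\R^{2\times 2}$ of dimension $2$, since it is parametrized by the two free entries $x,y$. $T^*$ is a totally non-singular subspace of dimension $2$, since $\det\smat{x&y\\-y&x}=x^2+y^2>0$ unless $x=y=0$, as already verified in \cref{ex:2x2}. So each maximum is at least $2$.

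For the upper bound on singular spaces, I take an arbitrary singular space $S\leq \R^{2\times 2}$ and pair it with the fixed totally non-singular space $T^*$. By \eqref{eq:S cap T} we have $S\cap T^*=\{0\}$, and then \eqref{eq:dim sum} with $n=2$ gives $\dim S+\dim T^*\leq 4$. Since $\dim T^*=2$, this yields $\dim S\leq 2$.

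The bound on totally non-singular spaces is symmetric. For an arbitrary totally non-singular space $T\leq\R^{2\times 2}$, pairing it with $S^*$ gives $S^*\cap T=\{0\}$, hence $\dim T\leq 4-\dim S^*=2$.

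There is essentially no obstacle here. The only points to check are that $S^*$ and $T^*$ really are $2$-dimensional subspaces, which holds because they are linearly parametrized by $(x,y)$ with independent basis matrices, and that the realness of $\R$ is what makes $x^2+y^2$ vanish only at $0$. The fact that the argument uses this property of $\R$ is exactly the field-dependence the abstract alludes to.
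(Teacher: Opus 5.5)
Your proposal is correct and follows the paper's proof: the existence of $T^*$ (resp.\ $S^*$), combined with the dimension count \eqref{eq:dim sum}, gives $\dim S\leq 4-2=2$ for every singular space $S$ (resp.\ $\dim T\leq 2$ for every totally non-singular space $T$), and the two examples supply the matching lower bounds. The only difference is that you state the lower bounds explicitly, whereas the paper leaves them implicit.
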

\begin{proof}
	Since $T^*$ is totally non-singular and satisfies $\dim T^*=2$, we find from \eqref{eq:dim sum} that $\dim S \leq 4-\dim T^* = 2$ for any singular matrix space $S \leq \R^{2\times 2}$. Similarly, for any totally non-singular space $T$, \eqref{eq:dim sum} again shows $\dim T \leq 4-\dim S^*=2$.
\end{proof}
Let us pause and reflect on how remarkable this is: we have proven an upper bound on the dimension of \emph{any} singular matrix space $S$, simply by exhibiting \emph{a single} totally non-singular matrix space $T^*$ (and vice versa). That is, we have proved a ``for all'' statement by proving a ``there exists'' statement, which is (usually) a much simpler task. 
\end{example}

We can encapsulate the preceding discussion in the following more general statement, which is again a simple corollary of \eqref{eq:dim sum}.
\begin{proposition}\label{prop:duality}
	Suppose that $S^* \leq F^{n\times n}$ is singular and $T^* \leq F^{n\times n}$ is totally non-singular, and suppose that $\dim S^*+\dim T^*=n^2$. Then
	\[
		\max_{\text{\rm singular }S \leq F^{n\times n}} \dim S = \dim S^* \quad \text{ and } \quad \max_{\text{\rm totally non-singular }T \leq F^{n\times n}} \dim T = \dim T^*.
	\]
\end{proposition}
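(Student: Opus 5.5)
The argument is the one from \cref{ex:2x2}, carried out for general $n$ and $F$; the only tool needed is the dimension inequality \eqref{eq:dim sum}, which in turn rests on the trivial intersection property \eqref{eq:S cap T}.

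First, the singular case. Let $S \leq F^{n\times n}$ be an arbitrary singular space. Apply \eqref{eq:S cap T} to the pair $(S, T^*)$: a nonzero matrix in $S \cap T^*$ would have to be both singular and invertible, so $S \cap T^* = \{0\}$. The sum $S + T^*$ is then a direct sum inside $F^{n\times n}$, and the standard formula $\dim(S+T^*) = \dim S + \dim T^* - \dim(S\cap T^*)$ gives \eqref{eq:dim sum}, namely $\dim S + \dim T^* \leq n^2$. Combining this with the hypothesis $\dim S^* + \dim T^* = n^2$ yields
\[
\dim S \leq n^2 - \dim T^* = \dim S^*.
\]
Since $S^*$ is itself singular, the maximum over singular $S$ is attained at $S^*$ and equals $\dim S^*$.

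The totally non-singular case is symmetric. For any totally non-singular $T$, apply \eqref{eq:dim sum} to the pair $(S^*, T)$ to get $\dim T \leq n^2 - \dim S^* = \dim T^*$. This bound is attained by $T^*$, so the maximum equals $\dim T^*$.

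There is no real obstacle here. The only points to state explicitly are that both maxima are taken over nonempty families, since $S^*$ and $T^*$ themselves belong to them, and that the bound for each family uses the given extremal example from the \emph{other} family.
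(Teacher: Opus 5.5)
Your proof is correct and is the paper's argument: apply \eqref{eq:dim sum} to the pairs $(S,T^*)$ and $(S^*,T)$, then use the hypothesis $\dim S^*+\dim T^*=n^2$. You also state explicitly that the maxima are attained by $S^*$ and $T^*$ themselves, which the paper leaves implicit.
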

\begin{proof}
	By \eqref{eq:dim sum}, we have $\dim S \leq n^2 - \dim T^* = \dim S^*$ for any singular $S \leq F^{n\times n}$. Similarly, $\dim T \leq n^2-\dim S^* = \dim T^*$ for any totally non-singular $T$. 
\end{proof}
Again, the point of this is that the {existence} of a \emph{single} pair $(S^*,T^*)$ as in the proposition statement implies a sharp upper bound on the dimension of \emph{any} singular or totally non-singular space. 

Before proceeding, let us remark on the similarity between \cref{prop:duality} and the notion of \emph{duality}, which is one of the most important concepts in mathematical optimization (and in particular in linear, semidefinite, and convex programming). In such areas, our goal is to solve an optimization problem of the form $\max_{x \in \X} f(x)$, where $f$ is some ``nice'' (e.g.\ concave) function, and $\X$ is some ``nice'' (e.g.\ convex) domain. Under certain hypotheses, the theory of duality yields a \emph{dual} optimization problem, of the form $\min_{y \in \Y} g(y)$, for some other nice function $g$ and nice domain $\Y$, with the property that $f(x) \leq g(y)$ for every $x\in \X, y \in \Y$. In particular, this implies that the optima of the two optimization problems satisfy
\begin{equation}\label{eq:weak duality}
	\max_{x \in \X} f(x) \leq \min_{y \in \Y}g(y).
\end{equation}
As a consequence, if we can find a single pair $(x^*,y^*) \in \X \times \Y$ satisfying $f(x^*)=g(y^*)=m$, then we conclude that
\[
	\max_{x \in \X} f(x) \geq f(x^*) = m = g(y^*) \geq \min_{y \in \Y} g(y).
\]
Combined with \eqref{eq:weak duality}, we see that all these inequalities must actually be equalities, hence we
learn that this $m$ is the optimum value of both optimization problems: we must have $\max_{x \in \X} f(x)=f(x^*)=m=g(y^*)=\min_{y \in \Y}g(y)$. In other words, proving the ``for all'' statement $f(x) \leq m$ for all $x \in \X$ boils down to proving the existence of the single pair $(x^*,y^*)$. Moreover, in the setting of convex optimization, we actually have a much stronger fact:  such a pair $(x^*,y^*)$ is guaranteed to exist (under mild hypotheses). That is, such an optimization problem can \emph{always} be solved by exhibiting a single pair of solutions $(x^*,y^*)$.

At this level of abstraction, we see that \cref{prop:duality} fits into exactly the same framework. Namely, let us define $\X$ to be the set of singular matrix spaces $S\leq F^{n\times n}$, and $\Y$ to be the set of totally non-singular matrix spaces $T \leq F^{n\times n}$. If we further set $f(S) =\dim S$ and $g(T)=n^2-\dim T$, then \eqref{eq:dim sum} precisely states that $f(S) \leq g(T)$ for all $S \in \X, T \in \Y$. Additionally, \cref{prop:duality} says nothing more than if we can find some $S^* \in \X, T^* \in \Y$ with $f(S^*)=g(T^*)$, then we automatically solve both optimization problems. 

With that in mind, let us state the main theorem that we will prove using this duality approach.
\begin{theorem}\label{thm:dieudonne}
	Let $F$ be a field and $n$ a positive integer.
	The maximum dimension of a singular space $S \leq F^{n\times n}$ is $n^2-n$. 
\end{theorem}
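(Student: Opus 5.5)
The plan is to apply \cref{prop:duality} over fields where it works, and then transfer the result to all fields with a model-theoretic argument, as the abstract suggests.

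\textbf{Lower bound.} This holds over every field. The space $S^*$ of matrices whose first row is zero is singular and has dimension $n^2-n$.

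\textbf{Upper bound over fields with a degree-$n$ extension.} Suppose $F$ has a field extension $K/F$ of degree $n$. Fix an $F$-basis of $K$. Then $K$ embeds in $F^{n\times n}$ via the regular representation $a\mapsto(\text{multiplication by }a)$. Its image $T^*$ is an $n$-dimensional subspace. Every nonzero element of $K$ is invertible in $K$, so the multiplication map it defines is an invertible $F$-linear map. Hence $T^*$ is totally non-singular. Since $\dim S^*+\dim T^*=n^2$, \cref{prop:duality} gives the maximum $n^2-n$. This covers, for example:
\begin{itemize}
\item finite fields, using $\F_{q^n}$;
\item $\Q$, using $\Q(2^{1/n})$, which has degree $n$ by Eisenstein;
\item any field $k(t)$, using $k(t^{1/n})$.
\end{itemize}

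\textbf{Transfer to an arbitrary field $F$.} This is the main obstacle, since algebraically closed fields have no degree-$n$ extensions. I would argue as follows.

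First, observe that a counterexample over $F$ remains a counterexample over larger fields. Suppose $S\leq F^{n\times n}$ is singular with $\dim S\geq n^2-n+1$, spanned by $M_1,\dots,M_d$. Then $\det(\sum x_iM_i)$ is the zero polynomial in $F[x_1,\dots,x_d]$. This holds automatically if $F$ is infinite. If $F$ is finite, we are already in the case handled above, so this case is done. Since the polynomial is identically zero, the span of the $M_i$ over any extension $L\supseteq F$ is still singular, and its dimension is unchanged because linear independence is preserved. So we may pass to $L=F(t)$, which has the degree-$n$ extension $F(t^{1/n})$. The previous step then rules out the counterexample.

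Alternatively, I could phrase this through ACF. The existence of such an $S$ is a first-order sentence in the language of fields: there exist $d$ matrices that are linearly independent, with the determinant polynomial identically zero. By the extension argument it suffices to treat algebraically closed fields. ACF$_p$ is complete, so the sentence holds in $\overline{\F_p}$ or $\overline{\Q}$ iff it holds in every model. I would then pull back via $\F_p(t)\subseteq \overline{\F_p(t)}$, using that a polynomial-identity statement descends. I expect the cleanest writeup to use the extension-to-$F(t)$ argument directly, keeping the model theory as the conceptual framing.
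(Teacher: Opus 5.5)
Your proof is correct, and it takes a genuinely different and much shorter route than the paper's. The paper proves the bound only over finite fields, lifts it to $\ol{\F_p}$, uses the completeness of $\acf_p$ and $\acf_0$ (\cref{thm:model theory}) to reach all algebraically closed fields, and only then descends to an arbitrary $F$ via \cref{lem:lin indep extension} and \cref{lem:schwartz zippel}.

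You use exactly the ingredients of that last step, but with $F(t)$ in place of $\ol F$. For infinite $F$, $\det(\sum x_iM_i)$ is the zero polynomial, so the $F(t)$-span of the $M_i$ is a singular space of the same dimension. Unlike $\ol F$, the field $F(t)$ already carries the dual object. In the paper's terminology, every rational function field $F(t)$ is algebraically open. This one observation eliminates both the $\ol{\F_p}$ step and all of the model theory.

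In your writeup, make the degree-$n$ claim explicit. The polynomial $x^n-t$ is Eisenstein at the prime $t$ of $F[t]$, hence irreducible over $F(t)$ by Gauss's lemma, in every characteristic. When $\mathrm{char}\,F\mid n$ the extension is inseparable. That is harmless, since the regular representation only uses that $K$ is a field of degree $n$.

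What the paper's route buys in exchange is generality. \cref{thm:general reduction} needs the bound only over finite fields and applies to any integer-polynomial property $P$. Your method instead needs a dual construction over every $F(t)$. For \cref{thm:flanders}, that means Roth-type spaces over $F(t)$ (e.g.\ built from cyclic extensions of every degree). For the nilpotent problem, it means constructions over $F(t)$ rather than only over finite fields.

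Your alternative ACF paragraph is vaguer; the ``pull back'' step is not spelled out. Since the $F(t)$ argument is complete on its own, you can drop it.
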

\cref{thm:dieudonne} was first proved by Dieudonn\'e \cite{MR29360} under the assumption that $\ab F >n$, an assumption that was finally lifted by Meshulam \cite{MR790482}, who found a proof that works for all fields. Both proofs use somewhat involved algebraic techniques and properties of high-degree polynomials; our main goal in this paper is to demonstrate that \cref{thm:dieudonne} can be proved much more simply, as a consequence of the duality inequality \eqref{eq:dim sum}. Let us now see how this works.

First, the existence of a singular space $S^* \leq F^{n\times n}$ of dimension $\dim S^*=n^2-n$ is not too hard; for example, we can take $S^*$ to consist of all matrices whose last row is $0$, as we did in \cref{ex:2x2}. So, following the framework discussed above, all that remains to do is to find a totally non-singular space $T^* \leq F^{n \times n}$ of dimension $\dim T^*=n$, which will then imply \cref{thm:dieudonne} by \cref{prop:duality}.

Unfortunately, this last step is not always possible. For example, one can show that if $F$ is algebraically closed (e.g.\ if $F=\C$), then any totally non-singular space in $F^{n\times n}$ has dimension at most $1$. In order to circumvent this (serious) issue, we actually prove \cref{thm:dieudonne} by proving the following two lemmas.
\begin{lemma}\label{lem:algebraically open}
	If $F$ is a finite field, then there is a totally non-singular $T^* \leq F^{n\times n}$ with $\dim T^*=n$. 

	In particular, \cref{thm:dieudonne} holds whenever $F$ is a finite field.
\end{lemma}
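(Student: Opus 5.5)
The construction uses the degree-$n$ extension of $F$. Let $F=\F_q$ and let $K=\F_{q^n}$ be its unique extension of degree $n$, so $K$ is an $n$-dimensional vector space over $F$. Fix an $F$-basis of $K$, which identifies $K$ with $F^n$. For each $\alpha\in K$, multiplication $m_\alpha\colon K\to K$, $x\mapsto \alpha x$, is $F$-linear, so in our basis it is given by a matrix $M_\alpha\in F^{n\times n}$. We set
\[
	T^* = \{ M_\alpha : \alpha \in K\}.
\]
This generalizes \cref{ex:2x2}: there $T^*$ was the regular representation of $\C$ over $\R$.

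The key steps are as follows, in order.

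First, $T^*$ is an $F$-subspace. The map $\alpha\mapsto M_\alpha$ is $F$-linear, since $(a\alpha+b\beta)x=a(\alpha x)+b(\beta x)$ for $a,b\in F$.

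Second, $\dim T^* = n$. The map $\alpha\mapsto M_\alpha$ is injective, because $M_\alpha=0$ gives $\alpha=\alpha\cdot 1=0$. Hence $\dim T^*=\dim_F K=n$.

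Third, $T^*$ is totally non-singular. If $\alpha\neq 0$, then $m_{\alpha^{-1}}$ is a two-sided inverse of $m_\alpha$, so $M_\alpha$ is invertible.

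For the ``in particular'' clause, take $S^*$ to be the space of matrices with zero last row, which is singular and has $\dim S^*=n^2-n$. Then $\dim S^*+\dim T^*=n^2$, and \cref{prop:duality} shows that the maximum dimension of a singular space is $n^2-n$. This is \cref{thm:dieudonne} for finite $F$.

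There is essentially no serious obstacle. The only non-trivial input is the existence of a field extension of $F$ of degree exactly $n$. For finite fields this is standard: $\F_{q^n}$ is the splitting field of $x^{q^n}-x$ over $\F_q$. Equivalently, there is an irreducible polynomial of degree $n$ over $\F_q$, and we may take $K=F[x]/(p)$, with $M_\alpha$ given concretely as polynomials in the companion matrix of $p$. I would cite this fact rather than reprove it. It is also exactly the property that fails for algebraically closed fields, which explains the restriction in the lemma.
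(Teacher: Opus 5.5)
Your proposal is correct and matches the paper's proof. The paper likewise takes $T^*=\{m_\alpha:\alpha\in K\}$ for a degree-$n$ extension $K\supset F$ (\cref{prop:deg n extension}), supplies that extension for finite fields via $\F_{q^n}$, and then applies \cref{prop:duality} with the zero-last-row space $S^*$. The only difference is that the paper derives injectivity of $\alpha\mapsto m_\alpha$ from the invertibility of each nonzero $m_\alpha$, whereas you evaluate at $1$, which is an equally valid route.
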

\begin{lemma}\label{lem:reduction}
	If \cref{thm:dieudonne} holds whenever $F$ is a finite field, then it holds for all fields.
\end{lemma}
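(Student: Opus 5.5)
The strategy is a transfer argument: encode the upper bound in \cref{thm:dieudonne} as a first-order sentence in the language of rings, move it from finite fields to algebraically closed fields, and then descend to an arbitrary field $F$ through its algebraic closure. The lower bound $n^2-n$ needs no transfer, since the space $S^*$ of matrices with zero last row works over every field. Also, any singular space of dimension $>n^2-n$ contains a singular subspace of dimension exactly $d:=n^2-n+1$. So \cref{thm:dieudonne} over $F$ is equivalent to:
\begin{quote}
$\varphi_n$: every linearly independent $d$-tuple $M_1,\dots,M_d\in F^{n\times n}$ admits $c\in F^d$ with $\det\bigl(\sum_i c_iM_i\bigr)\neq 0$.
\end{quote}

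\textbf{Step 1: $\varphi_n$ is a $\forall\exists$ sentence.} We quantify universally over the $dn^2$ entries of the $M_i$. Linear independence of the $M_i$ says that some $d\times d$ minor of the $d\times n^2$ coordinate matrix is nonzero, which is a quantifier-free disjunction of polynomial inequations. So $\varphi_n$ has the form
\[
\forall \bar M\, \exists \bar c\, \bigl[\neg\mathrm{indep}(\bar M)\ \vee\ \det(\textstyle\sum_i c_iM_i)\neq 0\bigr],
\]
with a quantifier-free matrix, i.e.\ it is $\forall\exists$.

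\textbf{Step 2: from finite fields to $\acf$.} By hypothesis $\varphi_n$ holds in every finite field. Since $\forall\exists$ sentences are preserved under unions of chains, it holds in $\ol{\F_p}=\bigcup_k \F_{p^{k!}}$. Concretely: given independent $M_i$ over $\ol{\F_p}$, all their entries lie in some $\F_{p^{k!}}$, and independence persists there, so a witness $c$ exists already in that finite field. Completeness of $\acf_p$ then gives $\varphi_n$ in every algebraically closed field of characteristic $p$. For characteristic $0$, suppose $\acf_0\vdash\neg\varphi_n$. By compactness, only finitely many axioms ``$p\neq0$'' are used, so $\neg\varphi_n$ would hold in $\ol{\F_p}$ for large $p$, a contradiction. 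Hence $\varphi_n$ holds in every algebraically closed field.

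\textbf{Step 3: descent to arbitrary $F$.} Finite $F$ is covered by hypothesis, so assume $F$ is infinite and let $K=\ol F$. Take a singular $S\leq F^{n\times n}$ with basis $M_1,\dots,M_d$. The polynomial $P(x)=\det(\sum_i x_iM_i)\in F[x_1,\dots,x_d]$ vanishes on all of $F^d$, hence is the zero polynomial because $F$ is infinite. Therefore the $K$-span of the $M_i$ is a singular subspace of $K^{n\times n}$. Its dimension is still $d$, since a nonzero minor stays nonzero in $K$. By Step 2, $d\leq n^2-n$.

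\textbf{Main obstacle.} The delicate point is Step 2: the sentence must be correctly of $\forall\exists$ form so that it passes to the directed union $\ol{\F_p}$, and completeness must be combined correctly with the compactness step for characteristic $0$. Step 3 needs the infinite/finite split, because over a finite field a polynomial can vanish everywhere without being zero.
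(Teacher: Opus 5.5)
Your proposal is correct and follows essentially the same route as the paper. You pass from finite fields to $\ol{\F_p}$ by putting a basis inside a finite subfield, transfer to all algebraically closed fields via completeness of $\acf_p$ and $\acf_0$ plus compactness, and descend to infinite $F$ using that a polynomial vanishing on all of $F^d$ is zero (the contrapositive of the paper's Schwartz--Zippel lemma) together with preservation of linear independence. Your $\forall\exists$/union-of-chains framing of the first step is just a repackaging of the paper's direct argument. In the characteristic-$0$ case, you should say explicitly that completeness of $\acf_0$ is what turns ``$\acf_0\nvdash\neg\varphi_n$'' into ``$\acf_0\vdash\varphi_n$''.
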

The proof of \cref{lem:algebraically open} is fairly short and straightforward, and actually works for a larger class of fields which we term \emph{algebraically open fields}, which includes some infinite fields such as $\Q$. The construction of $T^*$ for finite fields is fairly well-known, and the fact that it can be used to provide a proof of \cref{thm:dieudonne} for finite fields was also observed in \cite[Corollary 4]{MR1324051}.

Thus, in some sense, our main contribution is the proof of \cref{lem:reduction}, which is somewhat involved: we first use an elementary reduction to prove that \cref{thm:dieudonne} holds for the algebraically closed fields $\ol{\F_p}$ for all primes $p$, then use model-theoretic techniques (related to the proof of the Ax--Grothendieck theorem \cite{MR229613,MR217086}) to deduce that \cref{thm:dieudonne} holds for all algebraically closed fields, and finally use another elementary reduction to deduce the statement for all fields.

The reader might justly complain that they were promised a \emph{simple} proof of \cref{thm:dieudonne}, and this multi-step process looks rather convoluted. This is a fair criticism, but we believe that our proof of \cref{thm:dieudonne} has some advantages over the earlier proofs of Dieudonn\'e and Meshulam. First, for many fields that we are interested in (e.g.\ the rationals and all finite fields), only \cref{lem:algebraically open} is necessary, and hence for such fields the proof really is quite simple and follows the duality framework laid out above. Note, in particular, that small finite fields are precisely the fields for which Dieudonn\'e's original proof fails, and we obtain a very simple proof for such fields.

Second, while the proof of \cref{lem:reduction} is somewhat involved, it is also extremely general, and uses almost nothing about the fact that we are working with singular spaces. In fact, we prove a much more general result (see \cref{thm:general reduction} below), which states that for a wide variety of natural properties of matrix spaces, to upper-bound the maximum dimension of a space whose matrices all have this property, it suffices to solve this problem over all finite fields. 

In particular, we are hopeful that the duality technique can be used to prove other extremal results in matrix spaces. As a proof of concept, we prove the following generalization of \cref{thm:dieudonne} (which is precisely the case $m=n, r=n-1$ of the following result).
\begin{theorem}\label{thm:flanders}
	Let $F$ be a field and let $r \leq m \leq n$ be positive integers. The maximum dimension of a space $R \leq F^{m \times n}$ all of whose elements have rank at most $r$ is $rn$. 
\end{theorem}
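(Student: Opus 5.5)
Following the duality framework of \cref{prop:duality}, I plan to pair spaces of rank at most $r$ with spaces in which every nonzero matrix has rank \emph{greater than} $r$. If $R \leq F^{m\times n}$ has all ranks at most $r$, and $T \leq F^{m\times n}$ has every nonzero element of rank at least $r+1$, then $R\cap T=\{0\}$, so $\dim R + \dim T \leq mn$. The lower bound $rn$ is attained by the space of matrices whose last $m-r$ rows vanish. For the upper bound it therefore suffices, for finite $F$, to exhibit $T^*\leq F^{m\times n}$ with $\dim T^* = (m-r)n$ all of whose nonzero elements have rank at least $r+1$. For general fields I would then invoke the general reduction (\cref{thm:general reduction}, the generalization of \cref{lem:reduction}). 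This applies because ``every element has rank at most $r$'' is expressible by polynomial equations, namely the vanishing of all $(r+1)\times(r+1)$ minors on the space, exactly as singularity is expressed by vanishing of the determinant.

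For the construction over $F=\F_q$, I would identify $F^n$ with $\F_{q^n}$ and work with linearized ($q$-)polynomials. A linearized polynomial $L(x)=\sum_{i=0}^{k-1} a_i x^{q^i}$ with $a_i\in\F_{q^n}$ not all zero is an $\F_q$-linear map $\F_{q^n}\to\F_{q^n}$. Its kernel is the set of its roots, so the kernel has size at most $q^{k-1}$ and dimension at most $k-1$. The rank is therefore at least $n-k+1$.

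This is the Gabidulin construction. I take $k=m-r$ and the space of such $L$, which has $\F_q$-dimension $kn=(m-r)n$. Every nonzero element has rank at least $n-m+r+1$ as an $n\times n$ matrix. Composing with a fixed surjection $\pi:\F_{q^n}\to F^m$, i.e.\ deleting $n-m$ output coordinates, lowers rank by at most $n-m$, so $\pi\circ L$ has rank at least $r+1$. The remaining issue is that the map $L\mapsto \pi\circ L$ must be injective. Injectivity can fail, since $\pi\circ L=0$ only forces the image of $L$ into $\ker\pi$. Nonzero $L$ has rank at least $n-m+r+1>n-m=\dim\ker\pi$, however, so the image cannot lie in $\ker\pi$, and injectivity follows. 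Thus $T^*$ has dimension $(m-r)n$ and all nonzero ranks at least $r+1$, giving $\dim R\leq mn-(m-r)n=rn$ over finite fields.

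The main obstacle is the reduction step: checking that the hypotheses of the general reduction theorem cover this bounded-rank property for rectangular matrices. If the theorem is stated only for square matrices, I would embed $F^{m\times n}$ into $F^{n\times n}$ by padding with zero rows, which preserves rank, and treat ``rank at most $r$'' as the closed condition given by minors. The finite-field construction itself is routine once the root-counting bound on linearized polynomials is in hand.
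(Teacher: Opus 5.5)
Your proposal is correct and matches the paper's proof essentially step for step. It uses duality against the space with vanishing last $m-r$ rows, the linearized-polynomial (Roth/Meshulam) construction with $k=m-r$, truncation to $m$ output coordinates with the same injectivity and rank-drop argument as in \cref{cor:roth rectangular}, and then \cref{thm:general reduction} applied to the minors $P_{\leq r}$. The reduction theorem is already stated for $m\times n$ matrices, so your zero-padding workaround is unnecessary, though it would also work.
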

The lower bound is again easy: the space $R^*$ consisting of those matrices whose last $m-r$ rows are zero consists only of matrices of rank at most $r$, and has dimension $rn$. The upper bound in \cref{thm:flanders} was first proved by Flanders \cite{MR136618} under the assumption that $\ab F>r$, and the general case was again proved by Meshulam \cite{MR790482}. Again, we provide a new duality-based proof of \cref{thm:flanders}: we first show that for all finite fields, we can construct a space $Q^* \leq F^{m\times n}$ with $\dim Q^*=(m-r)n$ such that all non-zero matrices in $Q^*$ have rank strictly larger than $r$. We then have
\[
	\dim R^* + \dim Q^* = rn + (m-r)n = mn = \dim F^{m \times n},
\]
which implies \cref{thm:flanders} for all finite fields, via the same argument as in \cref{prop:duality}. Finally, we then use (the general form of) \cref{lem:reduction} to deduce that the same result holds for all fields. 

There are many other examples of extremal results on matrix spaces, and we hope that some of them can be proved along the same lines. A notable example is a theorem of Gerstenhaber and Serezhkin \cite{MR96678,MR828449}, which states that if $N \leq F^{n\times n}$ consists entirely of nilpotent matrices, then $\dim N \leq \binom n2$. It would again be very interesting to find a proof of this theorem using duality, i.e.\ by exhibiting a space of dimension $n^2-\binom n2$ none of whose non-zero elements is nilpotent. This is straightforward over $\R$, as the space of symmetric matrices does the job, but it is not clear to us how to do it (or whether it is even true) over finite fields.
\begin{conjecture}
For every finite field $F$ and every integer $n \geq 1$, there is a matrix space $W \leq F^{n\times n}$ with $\dim W = n^2-\binom n2$ such that no non-zero matrix in $W$ is nilpotent. 
\end{conjecture}

If one could find such a construction, our general form of \cref{lem:reduction} would again yield the Gerstenhaber--Serezhkin theorem over all fields. A more ambitious goal would be to use this approach to prove a theorem of Atkinson \cite{MR571726}, which simultaneously generalizes \cref{thm:dieudonne} and the Gerstenhaber--Serezhkin theorem; Atkinson's proof is again restricted only to large fields, and to the best of our knowledge, there is currently no known proof of Atkinson's theorem that works over all fields.

\section{Algebraically open fields.}
In this section, we prove \cref{lem:algebraically open}. As indicated above, we actually prove it for a more general class of fields than just finite fields. 

Before making a formal definition, let us think back to \eqref{eq:complex 2x2}. There, we considered the space of all matrices of the form $\smat{x&y\\-y&x}$ over $\R$, and observed that this space is totally non-singular. This matrix space is in fact an algebra (the product of two such matrices is another matrix of the same form). Moreover, a bit of thought will reveal that this algebra is isomorphic to $\C$: the matrix $\smat{x&y\\-y&x}$ corresponds to the complex number $x+iy$. The fact that this matrix space is totally non-singular boils down to the fact that $\C$ is a field: every non-zero element in $\C$ is invertible. 

With this understanding of the construction, it is fairly straightforward to generalize it as follows. We recall that a \emph{field extension} $F \subset K$ is just an inclusion of one field in another; in any such situation, the field $K$ is a vector space over $F$, and the \emph{degree} of the extension is the dimension of this vector space.
\begin{proposition}\label{prop:deg n extension}
	Suppose that $F \subset K$ is a field extension of degree $n$. Then there is a totally non-singular matrix space $T^* \leq F^{n\times n}$ with $\dim T^*=n$. 
\end{proposition}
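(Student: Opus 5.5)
The plan is to use the regular representation of $K$ on itself. Since $[K:F]=n$, fix an $F$-basis $e_1,\dots,e_n$ of $K$; this gives an $F$-linear isomorphism $\phi\colon K\to F^n$. For each $\alpha\in K$, let $m_\alpha\colon K\to K$ be multiplication by $\alpha$. Because $F\subset K$ and multiplication in $K$ is commutative and distributive, $m_\alpha(c x+y)=c\,m_\alpha(x)+m_\alpha(y)$ for $c\in F$, so $m_\alpha$ is $F$-linear. Let $M_\alpha\in F^{n\times n}$ be its matrix in the chosen basis. Define
\[
	T^*=\{M_\alpha : \alpha\in K\}.
\]

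The first step is to check that $T^*$ is a subspace of $F^{n\times n}$ of dimension $n$. The map $\alpha\mapsto M_\alpha$ is $F$-linear, since $m_{c\alpha+\beta}=c\,m_\alpha+m_\beta$ for $c\in F$. Its image $T^*$ is therefore a subspace. The map is also injective: if $M_\alpha=0$ then $\alpha=m_\alpha(1)=0$. Hence $\dim_F T^*=\dim_F K=n$.

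The second step is to show that $T^*$ is totally non-singular. A non-zero element of $T^*$ has the form $M_\alpha$ with $\alpha\neq 0$. Since $K$ is a field, $\alpha^{-1}$ exists, and $m_\alpha\circ m_{\alpha^{-1}}=m_{\alpha^{-1}}\circ m_\alpha=m_1=\mathrm{id}_K$. Translating to matrices gives $M_\alpha M_{\alpha^{-1}}=I_n$, so $M_\alpha$ is invertible.

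Nothing here is a real obstacle. The only points needing care are that the structure maps are $F$-linear, which uses $F\subset K$, and that the correspondence $\alpha\mapsto M_\alpha$ is injective, so that the dimension is exactly $n$.
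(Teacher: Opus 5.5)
Your proof is correct and follows essentially the same approach as the paper: the regular representation $\alpha \mapsto m_\alpha$, with $F$-linearity giving a subspace, invertibility of nonzero $\alpha$ giving total non-singularity, and injectivity giving $\dim T^* = n$. The only cosmetic difference is that you obtain injectivity from $\alpha = m_\alpha(1)$, whereas the paper deduces it from $m_\alpha$ being invertible (hence nonzero) for $\alpha \neq 0$.
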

\begin{proof}
	For every $\alpha \in K$, consider the multiplication map $m_\alpha:K \to K$ defined by $m_\alpha(x)=\alpha x$. This is an $F$-linear map, hence $m_\alpha \in \End_F(K)\cong F^{n\times n}$. Moreover, we clearly have that $m_{\alpha+\beta}=m_\alpha +m_\beta$ and that $m_{\gamma \alpha}=\gamma m_\alpha$ for $\gamma \in F$, hence the set of all $m_\alpha$ defines a vector subspace of $F^{n\times n}$, and we define $T^*=\{m_\alpha : \alpha \in K\} \leq F^{n\times n}$ to be this subspace.

	Moreover, as each non-zero $\alpha$ is invertible in $K$, we see that $m_\alpha$ is an invertible linear map. This implies that every non-zero element of $T^*$ is invertible, hence $T^*$ is totally non-singular. Moreover, it implies the $F$-linear map $\alpha \mapsto m_\alpha$ is injective, as no non-zero $\alpha$ can be in its kernel. As the domain of this map is $K$, which is $n$-dimensional, we conclude that $\dim T^*=n$. 
\end{proof}
This proposition is exactly what we wanted: it gives a general setting under which we can find a totally non-singular space of dimension $n$ in $F^{n\times n}$. All we need to apply this is access to a field extension $K \supset F$ of degree $n$. This motivates the following definition\footnote{We have not been able to find this notion in the literature, so do not think it has a standard name. The name ``algebraically open'' is meant to convey that such a field lies at the opposite extreme from an algebraically closed field, which has no non-trivial finite-degree extensions whatsoever.}. 
\begin{definition}
	A field $F$ is called \emph{algebraically open} if, for any positive integer $n$, there is a field extension $K \supset F$ of degree $n$. 
\end{definition}
Thanks to \cref{prop:deg n extension}, we see that if $F$ is algebraically open, then there is an $n$-dimensional totally non-singular space in $F^{n\times n}$ for all $n$. Thus, in order to complete the proof of \cref{lem:algebraically open}, it suffices to observe the following.
\begin{lemma}\label{lem:finite and number are open}
	All finite fields are algebraically open.
\end{lemma}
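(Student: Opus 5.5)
Let $F$ be a finite field with $q$ elements; the standard route is to exhibit the field $\F_{q^n}$ as a degree-$n$ extension of $F$. We may identify $F$ with $\F_q$, where $q=p^k$ for some prime $p$. Fix an algebraic closure $\ol F$ of $F$, and let
\[
K=\{x\in\ol F : x^{q^n}=x\}.
\]
We will show that $K$ is a field containing $F$ and that $[K:F]=n$.

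First, $K$ is a subfield of $\ol F$. The map $\phi(x)=x^{q^n}$ is a ring homomorphism, because the Frobenius map $x\mapsto x^p$ is additive in characteristic $p$ and $\phi$ is an iterate of it. Hence the fixed set $K$ of $\phi$ is closed under addition, multiplication, negation and inverses, so it is a subfield.

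Second, $F\subseteq K$. Every element $a\in F$ satisfies $a^q=a$, since $F^\times$ is a group of order $q-1$. Iterating gives $a^{q^n}=a$, so $a\in K$.

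Third, $\ab K=q^n$. The polynomial $f(x)=x^{q^n}-x$ has derivative $f'(x)=q^n x^{q^n-1}-1=-1$, since $q^n=0$ in characteristic $p$. Therefore $\gcd(f,f')=1$, so $f$ has no repeated roots. Since $\ol F$ is algebraically closed, $f$ splits there into exactly $q^n$ distinct roots, which are precisely the elements of $K$.

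Finally, $K$ is a finite-dimensional $F$-vector space, say of dimension $d$, so $\ab K=q^d$. Comparing with $\ab K=q^n$ gives $d=n$, i.e.\ $[K:F]=n$. This holds for every positive integer $n$, so $F$ is algebraically open.

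The only step with real content is the count $\ab K=q^n$, which rests on separability of $x^{q^n}-x$; the derivative computation handles it. If we wanted to avoid invoking an algebraic closure, an alternative is to pick an irreducible polynomial $g$ of degree $n$ over $F$ and set $K=F[x]/(g)$, a field of degree $n$ over $F$. Proving that such a $g$ exists is the harder route: it needs a count of irreducible polynomials via the identity $\sum_{d\mid n} d N_d = q^n$, where $N_d$ is the number of monic irreducible polynomials of degree $d$. So the construction via the algebraic closure is the one to use.

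Combined with \cref{prop:deg n extension}, this yields the totally non-singular space $T^*$ of \cref{lem:algebraically open}. \cref{prop:duality} then gives \cref{thm:dieudonne} for all finite fields.
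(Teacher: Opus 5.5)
Your proof is correct and takes the same approach as the paper. The paper cites the standard fact that $\F_{q^n}$ is a degree-$n$ extension of $\F_q$, and you prove that fact in the usual way, by taking the roots of $x^{q^n}-x$ in $\ol F$, checking they form a subfield containing $F$ of size $q^n$, and concluding that it has degree $n$ over $F$.
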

\begin{proof}
	This is an immediate and well-known corollary of the classification of finite fields: for every prime power $q$ and every integer $n$, there is a field $\F_{q^n}$ which is a degree-$n$ extension of the field $\F_q$, implying that $\F_q$ is algebraically open. As every finite field is of the form $\F_q$ for some prime power $q$, we have the desired conclusion. 
\end{proof}
Combining \cref{prop:deg n extension,lem:finite and number are open}, we obtain \cref{lem:algebraically open}.
Given that we only need \cref{lem:algebraically open} for finite fields, there is arguably not much advantage to defining algebraically open fields and proving \cref{prop:deg n extension} in this greater generality. However, the reason we did so is that many other fields besides finite fields are algebraically open. For example, it is a well-known consequence of properties of cyclotomic fields that $\Q$ is algebraically open (see e.g.\ \cite[Corollary 14.28]{MR2286236}). With more effort, one can prove that in fact, every number field is algebraically open\footnote{This also has an elementary proof using cyclotomic fields, or can alternatively be obtained as a very special case of the Grunwald--Wang theorem, a deep result in class field theory (see e.g.\ \cite[Theorem X.6]{MR2467155}).}. Thus, \cref{prop:deg n extension} gives an extremely simple proof of \cref{thm:dieudonne} for all such fields.

The other reason we work in the greater generality of algebraically open fields is that, more generally, the existence of totally non-singular spaces over $F$ is closely related to the set of field extensions of $F$. We have already mentioned that algebraically closed fields do not admit any non-trivial totally non-singular spaces. Over $\R$, the largest $n\times n$ totally non-singular spaces arise from the action of Clifford algebras, and this construction can be viewed as a generalization of \cref{prop:deg n extension} where we allow $K$ to be a non-commutative ring. For a more precise statement of this connection, see \cite[Section 4]{MR2645099}.

\section{The reduction to general fields.}
In this section, we state and prove a generalization of \cref{lem:reduction}, which implies that for a wide array of extremal problems on matrix spaces, a bound that holds over finite fields immediately implies that the same bound holds over all fields. We begin by defining the class of questions that this reduction theorem applies to. Roughly speaking, this is the class of matrix properties that can be defined by polynomials with integer coefficients.

Before giving the formal definition, we recall that for every field $F$, there is a unique ring homomorphism $\Z \to F$. This map naturally extends to a map of polynomial rings $\Z[x_1,\dots,x_\ell] \to F[x_1,\dots,x_\ell]$, and this allows us to view any integer polynomial as a polynomial defined over $F$. We will frequently abuse notation and use the same symbol to refer both to the polynomial with coefficients in $\Z$ and the polynomial with coefficients in $F$.
\begin{definition}
	Let $p_1,\dots,p_k \in \Z[x_{11},\dots,x_{mn}]$ be polynomials with integer coefficients in the variables $\{x_{ij}:1\leq i\leq m, 1 \leq j \leq n\}$. Let $P = \{p_1,\dots,p_k\}$, and let $Z(P) \subseteq F^{m\times n}$ be the set of $m \times n$ matrices $M$ satisfying $p_1(M) = \dots = p_k(M)=0$; here, we view $p_1,\dots,p_k$ as polynomials over $F$, and we plug in the $mn$ entries of $M$ as the variables of the polynomials. If $M \in Z(P)$, we say that $M$ \emph{satisfies} $P$. 

	Finally, we say that a matrix space $W \leq F^{m\times n}$ is a \emph{$P$-space} if $W \subseteq Z(P)$. That is, a $P$-space is a matrix space all of whose elements satisfy $P$.
\end{definition}
\begin{example}\label{ex:P-spaces}
	The determinant of an $n\times n$ matrix is an integer polynomial in the entries of the matrix. Hence, if we let $P_{\mathrm{sing}} = \{\det\}$, then a matrix satisfies $P_{\mathrm{sing}}$ if and only if its determinant is $0$, i.e.\ if and only if it is singular. Therefore, a $P_{\mathrm{sing}}$-space is the same as a singular space. 

	Similarly, the property of having rank at most $r$ can be encoded by polynomials. Namely, let $P_{\leq r}$ denote the set of all determinants of all $(r+1)\times (r+1)$ submatrices of the $m \times n$ matrix $\smat{x_{11}&\dotsb&x_{1n}\\\vdots &\ddots&\vdots\\x_{m1}&\dotsb&x_{mn}}$. Then a matrix satisfies $P_{\leq r}$ if and only if each of its $(r+1)\times (r+1)$ submatrices are singular, which happens if and only if its rank is at most $r$. Hence, a $P_{\leq r}$-space is the same as a matrix space all of whose elements have rank at most $r$. 

	Another example is the property of nilpotency, which can be encoded by polynomials in at least two different ways. First, an $n\times n$ matrix $M$ is nilpotent if and only if $M^{n}=0$. The $n^2$ entries of $M^{n}$ are each integer polynomials of the entries in $M$, so we can encode nilpotency by insisting that these $n^2$ polynomials are all zero. Second, a matrix is nilpotent if and only if its eigenvalues are all zero, which happens if and only if all coefficients of its characteristic polynomial, apart from the leading coefficient, are zero. The coefficients of the characteristic polynomial are themselves polynomials in the entries of the matrix, so we could instead demand that these $n$ polynomials in the entries are all zero.\footnote{As pointed out by an anonymous referee, the same idea can be used to obtain a different encoding of the property of rank at most $r$, at least for square matrices: an $n\times n$ matrix has rank at most $r$ if and only if it has at least $n-r$ zero eigenvalues, which happens if and only if the $n-r$ lowest coefficients of its characteristic polynomial are zero.}

	Finally, totally non-singular spaces are a non-example: there is no collection of polynomials $P$ such that a matrix space is totally non-singular if and only if it is a $P$-space. 
\end{example}
We now introduce one final piece of notation: for a set $P\subseteq \Z[x_{11},\dots,x_{mn}]$ of integer polynomials and a field $F$, let us denote by $\mdim_F(P)$ the maximum dimension of a $P$-space $W \leq F^{m\times n}$. 
With these definitions in hand, we can now state our general reduction theorem.
\begin{theorem}\label{thm:general reduction}
	Let $P \subseteq \Z[x_{11},\dots,x_{mn}]$ be a finite set of integer polynomials. Suppose that
	\begin{equation}\label{eq:mdim}
		\mdim_F(P)\leq d
	\end{equation}
	for every finite field $F$. Then the same holds for any field.
\end{theorem}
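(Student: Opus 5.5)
The plan follows the three-step outline given in the introduction: pass from finite fields to $\ol{\F_p}$, then to every algebraically closed field, then to every field.

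\emph{Step 1: from $\F_p$-finite fields to $\ol{\F_p}$.} Suppose $W \leq \ol{\F_p}^{m\times n}$ is a $P$-space of dimension $d+1$. Choose a basis $B_1,\dots,B_{d+1}$ of $W$. Only finitely many entries of $\ol{\F_p}$ occur in these matrices, so they all lie in some finite subfield $F=\F_{p^k}$. Let $W_0$ be the $F$-span of the $B_i$.

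The $B_i$ remain linearly independent over $F$, since independence over $\ol{\F_p}$ implies independence over $F$. Hence $\dim_F W_0 = d+1$. Also $W_0 \subseteq W \subseteq Z(P)$, so $W_0$ is a $P$-space over the finite field $F$. This contradicts the hypothesis, so $\mdim_{\ol{\F_p}}(P)\leq d$ for every prime $p$.

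\emph{Step 2: from the $\ol{\F_p}$ to all algebraically closed fields.} The key is to express ``there is a $P$-space of dimension $d+1$'' as a single first-order sentence $\varphi$ in the language of rings with integer constants. Quantify existentially over the $(d+1)mn$ entries of matrices $B_1,\dots,B_{d+1}$. Linear independence says some $(d+1)\times(d+1)$ minor of the $(d+1)\times mn$ coefficient matrix is nonzero, which is a finite disjunction of polynomial inequations.

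The containment $\operatorname{span}(B_i)\subseteq Z(P)$ is the clause $\forall c_1,\dots,c_{d+1}\ \bigwedge_{p\in P} p(\sum c_i B_i)=0$. Each $p(\sum c_iB_i)$ is an integer polynomial in the $c$'s and the entries of the $B_i$, so this is first-order. (Over infinite fields one could instead require all coefficients in $c$ to vanish, but the universal quantifier works uniformly.)

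By Step 1, $\neg\varphi$ holds in $\ol{\F_p}$ for every $p$. Now apply the standard model theory of $\acf$: completeness of $\acf_p$ transfers $\neg\varphi$ to all algebraically closed fields of characteristic $p$. For characteristic $0$, use compactness (the Lefschetz principle). If $\varphi$ held in some algebraically closed field of characteristic $0$, then $\acf_0\vdash\varphi$. That proof uses only finitely many axioms ``$\ell\neq 0$'', so $\varphi$ would hold in $\ol{\F_p}$ for all large $p$, a contradiction.

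\emph{Step 3: from algebraically closed fields to all fields.} Let $F$ be arbitrary and $W\leq F^{m\times n}$ a $P$-space with basis $B_1,\dots,B_{d'}$. Set $\ol W=\operatorname{span}_{\ol F}(B_i)$. The $B_i$ stay independent over $\ol F$, because linear independence is detected by a nonzero minor, which is field-invariant. The main obstacle is showing that $\ol W$ is still a $P$-space.

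The issue is that $q(c)=p(\sum c_iB_i)$ is a polynomial vanishing on all of $F^{d'}$, and we need it to vanish on $\ol F^{d'}$. If $F$ is infinite, a polynomial vanishing on $F^{d'}$ is identically zero, and we are done. If $F$ is finite, then $F$ itself is covered by the hypothesis, so $\dim W\leq d$ directly and no extension is needed.

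Combining the two cases, either $F$ is finite and the hypothesis applies, or $F$ is infinite and $\dim W=\dim_{\ol F}\ol W\leq\mdim_{\ol F}(P)\leq d$ by Step 2.

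I expect the most care to be needed in Step 2, in formulating $\varphi$ correctly and invoking the transfer principle.
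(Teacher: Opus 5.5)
Your proposal is correct and follows essentially the same route as the paper. It has the same three steps: restricting to a finite subfield $\F_{p^k}$ of $\ol{\F_p}$; a first-order transfer via completeness of $\acf_p$ and $\acf_0$ together with the finiteness of proofs; and extending scalars to $\ol F$, using that a polynomial vanishing on $F^t$ is the zero polynomial when $F$ is infinite. The only differences are cosmetic: you encode linear independence by a nonzero minor, and you argue Step 3 directly rather than by the paper's contradiction-plus-Schwartz--Zippel phrasing, which is the same argument in contrapositive form.
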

In particular, this theorem immediately implies \cref{lem:reduction}, which corresponds to the case $m=n, d=n^2-n$, and $P = P_{\mathrm{sing}} = \{\det\}$. 

The proof of \cref{thm:general reduction} breaks into three steps, which are accomplished in each of the following three subsections. For the remainder of this section, we treat $P,m,n,d$ as fixed. 

\subsection{From finite fields to \texorpdfstring{$\ol {\F_p}$}{\string\\bar F\_p}.}
We begin by showing that if \eqref{eq:mdim} holds for all finite fields, then it holds for their algebraic closures $\ol{\F_p}$, for all primes $p$.

Thus, let us fix some $P$-space $W \leq {\ol {\F_p}}^{m\times n}$; our goal is to show that $\dim W \leq d$. We fix a basis $M_1,\dots,M_t$ for $W$. Recall that $\ol{\F_p}$ is the union of the finite fields $\F_{p^k}$ for all $k$; this means that every element of $\ol{\F_p}$ is an element of $\F_{p^k}$ for some $k$. In particular, we can find some integer $k$ such that each of the $tmn$ entries of $M_1,\dots,M_t$ all lie in the field $F= \F_{p^k}$. 

Let $W_F \leq F^{m\times n}$ be the matrix space over $F$ spanned by these matrices. As $M_1,\dots,M_t$ are linearly independent over $\ol{\F_p}$, they are also linearly independent over the subfield $F$, hence $\dim W_F=\dim W$. Moreover, every matrix in $W_F$ is also a matrix in $W$: it is an $F$-linear combination of $M_1,\dots,M_t$, hence it is also a $\ol{\F_p}$-linear combination of $M_1,\dots,M_t$, hence it lies in $W$. In particular, every matrix in $W_F$ satisfies $P$, since $W$ is a $P$-space. As a consequence,
\[
	\dim W = \dim W_F \leq \mdim_F(P) \leq d,
\]
which is what we wanted to prove.

\subsection{From \texorpdfstring{$\ol {\F_p}$}{\string\\bar F\_p} to all algebraically closed fields via model theory.}
We now turn to the crux of the argument, where we use model-theoretic tools to convert our knowledge of the bound \eqref{eq:mdim} from the special algebraically closed fields $\ol{\F_p}$ to all algebraically closed fields. Such arguments are now fairly commonplace, and to our knowledge they originate with the classical proofs of the Ax--Grothendieck theorem \cite{MR229613,MR217086}. This is a non-trivial statement about polynomial maps $\C^n \to \C^n$, but the analogous statement for $\ol{\F_p}^n$ is rather simple; the observation of Ax and Grothendieck is that one can deduce the statement for $\C$ from the statement for $\ol{\F_p}$. 

To state the main model-theoretic result we need, we recall that a \emph{first-order sentence in the language of rings} is any statement that can be formed with the quantifiers $\forall$ and $\exists$, the logical operations $\neg,\wedge,\vee$, variables for ring elements, and the symbols $0,1,+,-,\times$ corresponding to the additive and multiplicative identities, and to the basic operations in a ring. 

A given first-order sentence $\phi$ can be true in some rings but not in others. For example, the sentence $1+1=0$ is true in rings of characteristic $2$ but in no others. The sentence $\forall x:(x=0)\vee(\exists y:x\times y=1)$ states that every non-zero element has a multiplicative inverse, and thus is true in all fields but false in any ring that is not a field. The sentence $\exists x: x\times x+1=0$ is true in any algebraically closed field, but not true in fields such as $\R$ in which $-1$ is not a perfect square. 

Most importantly for our purposes, the statement ``$\mdim_F(P)\leq d$'' can be encoded as a first-order sentence in the language of rings, such that this sentence is true in $F$ if and only if we actually have $\mdim_F(P)\leq d$. Indeed, the statement $\mdim_F(P)\leq d$ holds if and only if for all linearly independent $M_1,\dots,M_{d+1} \in F^{m\times n}$, there is some matrix in their span which does not satisfy $P$. The fact that $M_1,\dots,M_{d+1}$ are linearly independent can be expressed as a first-order sentence (for all $\lambda_1,\dots,\lambda_{d+1}$, either $\lambda_1=\dots=\lambda_{d+1}=0$, or the linear combination $\sum \lambda_i M_i$ is non-zero), and the fact that there exists a matrix in the span not satisfying $P$ is also a first-order statement (there exist $\lambda_1,\dots,\lambda_{d+1}$ such that for $M=\sum\lambda_i M_i$, either $p_1(M)\neq 0$, or $p_2(M)\neq 0$, or\dots). We remark that this is the only place in the proof where we use that the polynomials in $P$ have integer coefficients: this fact means that every polynomial in $P$ can be expressed using variables and the symbols $0,1,+,-,\times$.

In particular, the next step of our reduction follows immediately from the following theorem. 
\begin{theorem}\label{thm:model theory}
	Let $\phi$ be a first-order sentence in the language of rings. Then the following are equivalent.
	\begin{enumerate}[label=(\roman*)]
		\item $\phi$ holds in the fields $\ol{\F_p}$ for all primes $p$.\label{it:Fp bar}
		\item $\phi$ holds in all algebraically closed fields. \label{it:all acf}
	\end{enumerate}
\end{theorem}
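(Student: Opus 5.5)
The direction \ref{it:all acf}$\Rightarrow$\ref{it:Fp bar} is trivial, since each $\ol{\F_p}$ is algebraically closed. For the converse, I would use two classical facts about the theory $\acf$ of algebraically closed fields. The first is \emph{completeness}: for each characteristic $q$ (a prime or $0$), the theory $\acf_q$ is complete. This means any two algebraically closed fields of the same characteristic satisfy the same first-order sentences. The second is the \emph{compactness theorem}.

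The argument then splits by characteristic.

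\textbf{Positive characteristic.} Assume \ref{it:Fp bar}, and let $K$ be algebraically closed of characteristic $p>0$. Both $K$ and $\ol{\F_p}$ are models of $\acf_p$. By completeness, $K\models\phi$ because $\ol{\F_p}\models\phi$.

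\textbf{Characteristic zero.} This is the real content. Suppose for contradiction that $\phi$ fails in some algebraically closed field of characteristic $0$. By completeness of $\acf_0$, $\neg\phi$ then holds in every model of $\acf_0$, so $\acf_0\vdash\neg\phi$. A proof uses only finitely many axioms. Among these are finitely many of the axioms ``$1+\dots+1\neq 0$'' ($q$ ones), so there is a bound $N$ such that $\acf\cup\{q\cdot 1\neq 0: q\leq N\}\vdash\neg\phi$. Choosing a prime $p>N$, the field $\ol{\F_p}$ satisfies all these axioms and hence satisfies $\neg\phi$, contradicting \ref{it:Fp bar}.

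Here $\acf$ is axiomatized by the field axioms together with, for each degree, the sentence that every monic polynomial of that degree has a root. This step could equivalently be phrased with ultraproducts: a nonprincipal ultraproduct $\prod\ol{\F_p}/\mathcal U$ is algebraically closed of characteristic $0$, and by {\L}o\'s's theorem it satisfies $\phi$. Completeness then transfers $\phi$ to every algebraically closed field of characteristic zero.

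The main obstacle is the completeness of $\acf_q$, which I would not reprove from scratch but cite. If a proof is wanted, I would use the {\L}o\'s--Vaught test. $\acf_q$ has no finite models, and it is $\kappa$-categorical for every uncountable $\kappa$: two algebraically closed fields of the same characteristic and the same uncountable cardinality $\kappa$ both have transcendence degree $\kappa$ over the prime field. They are therefore isomorphic, by Steinitz's theorem on uniqueness of algebraic closures. Alternatively, completeness follows from quantifier elimination for $\acf$ (Tarski--Chevalley). Either way, this is the step carrying all the algebraic weight; the rest is a routine compactness argument.
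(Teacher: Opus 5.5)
Your proposal is correct and takes essentially the same route as the paper. Completeness of $\acf_p$ handles positive characteristic, and for characteristic $0$ you use completeness of $\acf_0$ together with the fact that a derivation of $\neg\phi$ uses only finitely many characteristic axioms, giving a large prime $p$ with $\ol{\F_p}\models\neg\phi$; your justification of completeness via {\L}o\'s--Vaught and uncountable categoricity also matches the paper's sketch.
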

In particular, given that we now know that $\mdim_F(P)\leq d$ holds for $F=\ol{\F_p}$ for all $p$, we conclude that it holds for all algebraically closed fields. 

As \cref{thm:model theory} may (and should!) appear completely magical to those unfamiliar with the model-theoretic background, we include a proof sketch of \cref{thm:model theory} in \cref{sec:model theory}, and a full proof can be found in \cite[Corollaries 2.2.9 and 2.2.10]{MR1924282}. For further applications of this result and its variants, we recommend the survey \cite{MR2555994}. However, for the moment, we continue with our proof of \cref{thm:general reduction}.

\subsection{From algebraically closed fields to all fields.}
Before we begin the final step, we record the following simple lemma.
\begin{lemma}\label{lem:lin indep extension}
	Let $F,K$ be fields with $F \subseteq K$. Let $v_1,\dots,v_t \in F^k$ be linearly independent over $F$. Then $v_1,\dots,v_t$ are also linearly independent over $K$, when viewed as vectors in the vector space $K^k$. 
\end{lemma}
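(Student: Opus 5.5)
The plan is to reduce the statement to the fact that a square matrix over $F$ has nonzero determinant over $F$ exactly when it does over $K$, since the determinant is computed the same way in both fields.

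First, arrange $v_1,\dots,v_t$ as the columns of a $k\times t$ matrix $A$ with entries in $F$. Linear independence of the $v_i$ over a field $L\supseteq F$ means that $A\lambda=0$ with $\lambda\in L^t$ forces $\lambda=0$. Equivalently, $A$ has rank $t$ over $L$.

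Next, I would characterize rank $t$ by minors. A $k\times t$ matrix over a field $L$ has rank $t$ if and only if some $t\times t$ submatrix has nonzero determinant. This is standard: full column rank means the $t$ columns are independent, so the row space has dimension $t$, and some $t$ rows are independent. The $t\times t$ submatrix on those rows is then invertible. Conversely, an invertible $t\times t$ submatrix already forces $A\lambda=0\Rightarrow\lambda=0$.

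Finally, apply this twice. Since the $v_i$ are independent over $F$, there is a $t\times t$ submatrix $B$ of $A$ with $\det B\neq 0$ in $F$. The determinant is an integer polynomial in the entries, so $\det B$ is the same element whether computed in $F$ or in $K$, and it stays nonzero in $K$ because $F\subseteq K$. Applying the criterion over $K$ gives rank $t$ over $K$, i.e.\ the $v_i$ are independent over $K$.

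There is no real obstacle here. The only step that needs care is the rank–minor criterion, and I would simply cite it as standard linear algebra.

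As an alternative that avoids determinants, one could pick an $F$-basis $\{e_j\}$ of $K$ (possibly infinite; this uses a basis of $K$ as an $F$-vector space). A relation $\sum_i \lambda_i v_i=0$ with $\lambda_i=\sum_j c_{ij}e_j$ and $c_{ij}\in F$ can then be split coordinatewise. Because the entries of the $v_i$ lie in $F$ and the $e_j$ are $F$-independent, each $j$ gives a separate relation $\sum_i c_{ij}v_i=0$ over $F$. Independence over $F$ forces all $c_{ij}=0$, hence all $\lambda_i=0$. I would present the determinant version as the main argument, since it is shorter and parallels the polynomial viewpoint used throughout this section.
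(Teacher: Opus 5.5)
Your proof is correct and is essentially the paper's argument: you put the $v_i$ as the columns of a $k\times t$ matrix, extract a $t\times t$ submatrix with nonzero determinant over $F$, and note that this determinant is the same nonzero element of $K$, which gives rank $t$ over $K$. Your alternative via an $F$-basis of $K$ is also valid, and you can avoid choosing a basis of all of $K$ by working only with the finite-dimensional $F$-span of $\lambda_1,\dots,\lambda_t$; the determinant version is what the paper uses.
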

\begin{proof}
	Let $V$ be the $k\times t$ matrix whose columns are $v_1,\dots,v_t$. As $v_1,\dots,v_t$ are linearly independent over $F$, we see that $V$ has rank $t$, when viewed as a matrix over $F$. Therefore, there is some collection of $t$ rows of $V$ such that the corresponding $t\times t$ submatrix $V$ also has rank $t$ over $F$. In particular, this implies that $\det V'$ is some non-zero element of $F$.

	However, if we now turn to compute $\det V'$ over the field $K$, we will get the same answer, as we are working with the same matrix and the determinant is computed identically over all fields. Hence $\det V'$ is also a non-zero element of $K$, meaning that $V'$ is invertible over $K$. Thus $V'$ has rank $t$ over $K$, and hence so does $V$. As a consequence, the columns of $V$, which are the vectors $v_1,\dots,v_t$, are linearly independent over $K$.
\end{proof}

Let now $F$ be a field, and let $\ol F$ be its algebraic closure. Thanks to \cref{thm:model theory}, we know that $\mdim_{\ol F}(P)\leq d$. Our goal now is to prove that $\mdim_F(P)\leq d$ as well. 

As the assumption of \cref{thm:general reduction} is that \eqref{eq:mdim} holds for all finite fields, we may assume that $F$ is infinite. Suppose for contradiction that $\mdim_F(P)>d$, namely that there exists some $P$-space $W \in F^{m\times n}$ with $\dim W>d$. Fix a basis $M_1,\dots,M_t$ for $W$, and note that by \cref{lem:lin indep extension}, $M_1,\dots,M_t$ are linearly independent over $\ol F$. Therefore, if we let $\ol W \in \ol F^{m\times n}$ be the $\ol F$-span of $M_1,\dots,M_t$ we have that $\dim \ol W=\dim W > d \geq \mdim_{\ol F}(P)$. Therefore, $\ol W$ must not be a $P$-space, hence there exists some $M \in \ol W$ not satisfying $P$. In particular, there is some fixed polynomial $p \in P$ such that $p(M) \neq 0$.

Let us define the polynomial $q \in F[y_1,\dots,y_t]$ by
\[
	q(y_1,\dots,y_t) = p(y_1 M_1 + \dots +y_t M_t).
\]
Since the entries of $M_1,\dots,M_t$ are in $F$, we see that $q$ is indeed a polynomial with coefficients in $F$. Since $F \subseteq \ol F$, we can also view it as a polynomial over $\ol F$. 

Since $M\in \ol W$, there exist some $\lambda_1,\dots,\lambda_{t} \in \ol F$ such that $M= \lambda_1 M_1 + \dots + \lambda_t M_t$. And since $p(M) \neq 0$, we conclude that $q(\lambda_1,\dots,\lambda_t)\neq 0$. In particular, we see that $q$ is not the zero polynomial; if it were, we would have $q(\lambda_1,\dots, \lambda_t)=0$ for all $(\lambda_1,\dots,\lambda_t) \in \ol F^t$. 

To conclude, we will use the following simple and well-known lemma about polynomials.
\begin{lemma}\label{lem:schwartz zippel}
	If $F$ is an infinite field and $q \in F[y_1,\dots,y_t]$ is not the zero polynomial, then there exists some $(\mu_1,\dots,\mu_t) \in F^t$ such that $q(\mu_1,\dots,\mu_t)\neq 0$. 
\end{lemma}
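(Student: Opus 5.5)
I will prove \cref{lem:schwartz zippel} by induction on the number of variables $t$, using only the fact that a non-zero univariate polynomial over a field has at most as many roots as its degree.

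For the base case $t=1$, a non-zero $q \in F[y_1]$ of degree $D$ has at most $D$ roots in $F$. This follows from the division algorithm: each root $a$ gives a factor $(y_1-a)$, and $F[y_1]$ is an integral domain, so the degree bounds the number of distinct roots. Since $F$ is infinite, some $\mu_1 \in F$ is not a root, and then $q(\mu_1)\neq 0$. (If one prefers, the case $t=0$ can serve as the base case instead: there $q$ is a non-zero constant and the statement is trivial.)

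For the inductive step, suppose the lemma holds for $t-1$ variables, and let $q \in F[y_1,\dots,y_t]$ be non-zero. Write
\[
	q(y_1,\dots,y_t) = \sum_{j=0}^{D} c_j(y_1,\dots,y_{t-1})\, y_t^j,
\]
where the $c_j \in F[y_1,\dots,y_{t-1}]$ and $c_D$ is not the zero polynomial. Such an expansion with $c_D\neq 0$ exists precisely because $q\neq 0$. By the induction hypothesis, there are $\mu_1,\dots,\mu_{t-1}\in F$ with $c_D(\mu_1,\dots,\mu_{t-1})\neq 0$. Then the univariate polynomial $r(y_t)=q(\mu_1,\dots,\mu_{t-1},y_t)\in F[y_t]$ has a non-zero coefficient of $y_t^D$, so $r$ is not the zero polynomial. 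By the base case, some $\mu_t\in F$ satisfies $r(\mu_t)\neq 0$, which is exactly the statement $q(\mu_1,\dots,\mu_t)\neq 0$.

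There is no serious obstacle here. The only point needing care is the distinction between a polynomial being non-zero as a formal object and being non-zero as a function. This is why infiniteness of $F$ is essential: over $\F_p$, the polynomial $y^p-y$ is non-zero but vanishes everywhere. The induction handles this by always passing to a leading coefficient that is a genuinely non-zero polynomial.
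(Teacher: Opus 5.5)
Your proof is correct and takes essentially the same route as the paper: induction on $t$, with the base case coming from the finiteness of the root set of a non-zero univariate polynomial. The inductive step is also the same, expanding $q$ in powers of $y_t$ and applying the induction hypothesis to the leading coefficient; your version handles the case $D=0$ (where $y_t$ does not appear) uniformly, which the paper splits off separately.
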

\cref{lem:schwartz zippel} is essentially (a weak form of) the Schwartz--Zippel lemma, and we include a short proof for completeness momentarily. But before we do, let us conclude the proof of \cref{thm:general reduction}. Applying \cref{lem:schwartz zippel} to the polynomial $q$ defined above, we obtain $(\mu_1,\dots,\mu_t) \in F^t$ with $q(\mu_1,\dots,\mu_t)\neq 0$. Let $M' =\mu_1 M_1 + \dots + \mu_t M_t$, and note that $M' \in W$, as $M'$ is an $F$-linear combination of the matrices $M_1,\dots,M_t$. Moreover, since $q(\mu_1,\dots,\mu_t)\neq 0$, we conclude that $p(M') \neq 0$. But this contradicts our assumption that $W$ is a $P$-space, completing the proof. 

As promised, we now present the proof of \cref{lem:schwartz zippel}.
\begin{proof}[Proof of \cref{lem:schwartz zippel}]
	We proceed by induction on $t$. The base case $t=1$ is immediate, since a non-zero polynomial in one variable has at most as many roots as its degree, and in particular has only finitely many roots; since $F$ is infinite it must have some non-root in $F$. 

	For the inductive step, let us write
	\[
		q(y_1,\dots,y_t) = \sum_{i=0}^s y_t^i q_i(y_1,\dots,y_{t-1})
	\]
	for some polynomials $q_0,\dots,q_s \in F[y_1,\dots,y_{t-1}]$, where $s$ is the degree of $y_t$ in $q$. Note that we may assume that $s>0$, since otherwise $y_t$ does not appear at all in $q$, and we are already done by the inductive hypothesis. This in particular implies that $q_s$ is not the zero polynomial. By the inductive hypothesis, we can find some $\mu_1,\dots,\mu_{t-1}\in F$ with $q_s(\mu_1,\dots,\mu_{t-1})\neq 0$. But this implies that $q(\mu_1,\dots,\mu_{t-1},y_t)$, viewed as a one-variable polynomial, is not the zero polynomial. So by again applying the base case we can find some $\mu_t$ such that $q(\mu_1,\dots,\mu_t)\neq 0$, as claimed. 
\end{proof}

\section{Proof sketch of Theorem \ref{thm:model theory}.}\label{sec:model theory}
In this section, we give a brief sketch of the model-theoretic background needed to prove \cref{thm:model theory}. We stress that while this sketch conveys all of the key ideas, it is by no means a full proof, and we refer to a standard introductory text in model theory such as \cite{MR1924282} for more formal definitions and proofs. 

For our purposes, a \emph{theory} is simply a collection of axioms, each of which is a first-order sentence in the language of rings\footnote{Of course, there is no reason to restrict oneself to the language of rings, and indeed the real definition of a theory works with an arbitrary language. But our only goal is \cref{thm:model theory}, so we restrict ourselves to this less general setting. We also assume throughout that our rings are commutative and have a multiplicative identity. }. For example, the theory of rings consists of all of the usual ring axioms, e.g.\ the axiom ``$\forall x:x+0=x$'' that $0$ is the additive identity, the axiom ``$\forall x \forall y:x+y=y+x$'' that addition is commutative, and the axiom ``$\forall x \forall y \forall z:x\times(y+z)=(x\times y)+(x\times z)$'' that multiplication distributes over addition. If we add to these axioms the axiom ``$\forall x:(x=0)\vee(\exists y:x\times y=1)$'' that every non-zero element is invertible, we obtain the theory of fields. We can then add further axioms to get the theory of algebraically closed fields; this involves adding, for every degree $d$, an axiom stating that every monic polynomial of degree $d$ has a root.

A closely related concept is that of a \emph{model}. Loosely speaking, a model $\M$ of a theory $\T$ is an object in which all the axioms of $\T$ hold. Thus, for instance, $\Z$ is a model of the theory of rings, but is not a model of the theory of fields; this is saying nothing more but that $\Z$ is a ring and is not a field. 

Given a theory $\T$ and a sentence $\phi$, we say that \emph{$\T$ proves $\phi$} if $\phi$ is true in every model $\M$ of $\T$. For example, an elementary exercise in any first course on rings is that additive inverses are unique; this means that the theory of rings proves the sentence $\forall x \forall y \forall z:(y= z)\vee (x+y\neq 0) \vee(x+z\neq 0)$. 
The reader might complain that this is not what ``prove'' should mean. Instead, it should mean that one can perform a series of logical deductions to derive $\phi$ from the axioms of $\T$. Indeed, when one solves the elementary ring theory exercise above, one does not actually verify that this property holds by inspecting all rings one by one, but instead derives this property from the axioms of rings. Luckily, the famous \emph{completeness theorem} of G\"odel says that these two notions of proof are equivalent: one can logically deduce $\phi$ from the axioms in $\T$ if and only if $\phi$ is true in every model of $\T$. 

A theory 
is called \emph{complete} if it can determine the veracity of any statement, i.e.\ for every sentence $\phi$, it can either prove $\phi$ or $\neg \phi$. 
This is a very strong condition: it implies that for every first-order sentence $\phi$, either $\phi$ is true in all models or it is false in all models. Thus, for example, we can easily see that the theory of rings is incomplete: the fact that some rings are fields and some are not implies that the theory of rings cannot prove or disprove the axiom ``all non-zero elements have a multiplicative inverse'', and hence the theory of rings is incomplete. Similarly, the theory of fields is incomplete, since some fields are algebraically closed and others are not. And again, the theory of algebraically closed fields is incomplete, because it can neither prove nor disprove the sentence ``$1+1=0$''; this is because there are algebraically closed fields of characteristic $2$, and also algebraically closed fields of other characteristics.

Rather remarkably, however, this is essentially the only obstruction to completeness. More precisely, let $\acf$ denote the theory of algebraically closed fields. For every prime $p$, let $\chi_p$ denote the sentence $1+\dots+1=0$, where there are $p$ ones on the left-hand side. Then let $\acf_p = \acf \cup \{\chi_p\}$ denote the theory of algebraically closed fields of characteristic $p$, and let $\acf_0 = \acf \cup \{\neg \chi_2, \neg \chi_3,\neg \chi_5,\dots\}$ denote the theory\footnote{We are implicitly using here the fact that every algebraically closed field either has characteristic $0$ or characteristic $p$ for some prime $p$, hence if we add in the negations of all the axioms $\chi_p$, we get the theory of algebraically closed fields of characteristic $0$.} of algebraically closed fields of characteristic $0$. Then we have the following remarkable theorem.
\begin{theorem}\label{thm:acf complete}
	The theories $\acf_0$ and $\acf_p$, for every prime $p$, are complete. 
\end{theorem}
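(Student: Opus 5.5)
The plan is the classical route to completeness of $\acf_0$ and $\acf_p$: combine the \L{}o\'s--Vaught test with Steinitz's classification of algebraically closed fields. We need three standard inputs.
\begin{enumerate}[label=(\arabic*)]
\item The compactness theorem, which follows from G\"odel's completeness theorem: a theory has a model if every finite subset of it does.
\item The downward and upward L\"owenheim--Skolem theorems: a theory in a countable language with an infinite model has models of every infinite cardinality.
\item Steinitz's theorem: an algebraically closed field is determined up to isomorphism by its characteristic and its transcendence degree over the prime field.
\end{enumerate}

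Fix $\T \in \{\acf_0, \acf_p\}$ and suppose, for contradiction, that some sentence $\phi$ is neither proved nor refuted by $\T$. By the definition of ``proves'' used in the paper, there are then models $\M_1 \models \T \cup \{\phi\}$ and $\M_2 \models \T \cup \{\neg\phi\}$. Both are algebraically closed, so both are infinite; indeed, the sentences ``there exist at least $k$ distinct elements'' all follow from $\acf$, since a finite field is never algebraically closed. The language of rings is countable, so L\"owenheim--Skolem gives models $\M_1' \models \T\cup\{\phi\}$ and $\M_2' \models \T\cup\{\neg\phi\}$, each of cardinality $\kappa$ for some fixed uncountable $\kappa$, say $\kappa=2^{\aleph_0}$. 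Concretely, we pass to elementary extensions or substructures, which preserve the truth of $\phi$ and of $\neg\phi$ respectively.

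Next we show that $\T$ is $\kappa$-categorical. Let $K$ be an algebraically closed field of the given characteristic with $|K|=\kappa$, and let $k$ be its prime field $\Q$ or $\F_p$. Then $K$ is the algebraic closure of $k(B)$ for a transcendence basis $B$. Since $k(B)$ has cardinality $\max(|B|,\aleph_0)$ and taking the algebraic closure does not increase an infinite cardinality, we get $|B|=\kappa$ because $\kappa$ is uncountable. Two such fields therefore have transcendence bases of the same cardinality. A bijection of the bases extends to an isomorphism $k(B)\cong k(B')$, and this extends to the algebraic closures by uniqueness of the algebraic closure. Hence $\M_1'\cong\M_2'$. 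Isomorphic structures satisfy the same sentences, so $\phi$ and $\neg\phi$ hold in the same structure, a contradiction. Therefore $\T$ is complete.

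The main obstacle is the model-theoretic input, namely the L\"owenheim--Skolem step together with its reliance on compactness and completeness. In keeping with the sketch-level treatment of this section, we would cite it from \cite{MR1924282} rather than reprove it. The algebraic part, Steinitz's theorem plus the cardinality count, is routine. The one real subtlety is choosing $\kappa$ uncountable: in the countable case $\aleph_0$, categoricity fails, since $\ol{\Q}$ and $\ol{\Q(t)}$ are non-isomorphic countable models of $\acf_0$.
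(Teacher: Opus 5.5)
Your proposal is correct and takes essentially the same route as the paper's sketch. Both arguments use L\"owenheim--Skolem (the paper's ``adding uncountably many dummy elements'') to obtain models of $\phi$ and $\neg\phi$ of the same uncountable cardinality, then use the fact that an uncountable algebraically closed field has transcendence degree equal to its cardinality to make the two models isomorphic, a contradiction; you simply spell out the $\kappa$-categoricity step in more detail than the paper does.
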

The proof of \cref{thm:acf complete} makes it seem a bit less remarkable. Indeed, unpacking what it means, it states that for every sentence $\phi$, either $\phi$ or $\neg \phi$ is true in every algebraically closed field of a given characteristic. But the reason for this is just that there are not so many different algebraically closed fields of a given characteristic: a standard result in field theory is that algebraically closed fields are uniquely determined by their characteristic and their transcendence degree (see e.g.\ \cite[Theorem 9.16]{MR4506896}). To prove \cref{thm:acf complete}, suppose for contradiction that $F,F'$ are two algebraically closed fields of the same characteristic, such that $\phi$ is true in $F$ but false in $F'$. By adding uncountably many ``dummy elements'' (see \cite[Proposition 2.2.2]{MR1924282} for details), we can moreover assume that $F$ and $F'$ have the same uncountable cardinality. But an uncountable field has transcendence degree equal to its cardinality, hence $F$ and $F'$ must be isomorphic, a contradiction. 

Given \cref{thm:acf complete}, it is quite easy to prove \cref{thm:model theory}.
\begin{proof}[Proof of \cref{thm:model theory}]
	Clearly, \ref{it:all acf} implies \ref{it:Fp bar}, so we only need to prove the converse. So suppose that $\phi$ is true in $\ol{\F_p}$ for all primes $p$. This means that $\phi$ is true in some model of $\acf_p$ (namely $\ol{\F_p}$), hence it must be true in all models of $\acf_p$, by the completeness of $\acf_p$. That is, $\phi$ is true in every algebraically closed field of positive characteristic.

	So all that remains is to handle the case of characteristic $0$. Suppose for contradiction that $\acf_0$ does not prove $\phi$. By the completeness of $\acf_0$, this means that $\acf_0$ proves $\neg \phi$. Recall that, by the completeness theorem, this means that one can deduce $\neg\phi$ from the axioms of $\acf_0$ via some finite sequence of logical deductions. In this finite sequence of deductions, we must only use finitely many axioms of $\acf_0$, and in particular only use finitely many of the axioms $\neg \chi_p$. But if $p_0$ is some other prime not in this finite list, then the same proof is also valid in the theory $\acf_{p_0}$. Unpacking what all this means, we have found that if $\acf_0$ proves $\neg \phi$, then $\acf_{p_0}$ proves $\neg \phi$ as well, for all but finitely many primes $p_0$. But this is impossible, as we just saw that $\acf_{p_0}$ proves $\phi$ for all $p_0$. 
\end{proof}

\section{Proof of Theorem \ref{thm:flanders}.}
In this section, we use the same duality machinery we have developed to prove \cref{thm:flanders}. Recall the statement: for all integers $r \leq m \leq n$, if every matrix in $R \leq F^{m\times n}$ has rank at most $r$, then $\dim R \leq rn$. As the property of having rank at most $r$ can be described by integer polynomials (\cref{ex:P-spaces}), \cref{thm:general reduction} implies that it suffices to prove this statement for finite fields. And over finite fields, we have the following elegant result of Roth \cite{MR1093747}.
\begin{theorem}\label{thm:roth}
	If $F$ is a finite field and $s \leq n$ are non-negative integers, then there is a matrix space $Q \leq F^{n\times n}$ of dimension $\dim Q=(n-s)n$, all of whose non-zero elements have rank greater than $s$. 
\end{theorem}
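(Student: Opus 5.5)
We construct $Q$ from the degree-$n$ extension $K=\F_{q^n}$ of $F=\F_q$, in the same spirit as \cref{prop:deg n extension}, using the Frobenius $\sigma(x)=x^q$, which generates $\Gal(K/F)$. Identify $F^{n\times n}$ with $\End_F(K)$. By Artin's theorem on linear independence of characters, every $F$-linear map $K\to K$ can be written uniquely as $x\mapsto \sum_{i=0}^{n-1} a_i x^{q^i}$ with $a_i\in K$. Both sides have $F$-dimension $n^2$, and the powers $\sigma^0,\dots,\sigma^{n-1}$ are $K$-linearly independent.

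We take
\[
Q=\Big\{L(x)=\sum_{i=0}^{n-s-1} a_i x^{q^i} : a_0,\dots,a_{n-s-1}\in K\Big\}.
\]
This is an $F$-subspace of $\End_F(K)$. By uniqueness of the representation, it has $F$-dimension $(n-s)\cdot n$.

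The key step is the rank bound. Let $L\in Q$ be non-zero. Then $L$ is a non-zero $q$-polynomial (linearized polynomial) of degree at most $q^{n-s-1}$. Its kernel in $K$ is an $F$-subspace, and every element of the kernel is a root of $L$. Hence
\[
\ab{\ker L}\le q^{n-s-1},
\]
so $\dim_F\ker L\le n-s-1$. It follows that $\rk L\ge n-(n-s-1)=s+1>s$, as required.

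The edge cases are immediate. If $s=n$, then $Q=\{0\}$ and there is nothing to prove. If $s=0$, then $Q=\End_F(K)$ and the condition only requires non-zero maps to have positive rank, which is trivially true.

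The main point needing care is the dimension count, namely that the representation by $q$-polynomials of degree below $q^n$ is injective. I would justify it by Artin's independence of the distinct automorphisms $\sigma^i$ over $K$. Alternatively, one can use the direct argument that a non-zero $q$-polynomial of degree less than $q^n$ cannot vanish on all $q^n$ elements of $K$; this gives injectivity, and surjectivity then follows from comparing dimensions.

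Finally, to deduce \cref{thm:flanders} for $m\times n$ matrices, I would restrict the maps in $Q$ (with $s=r$) to their first $m$ rows, where the $n-m$ discarded rows lower the rank by at most $n-m$. This is a separate step from \cref{thm:roth} itself.
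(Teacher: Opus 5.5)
Your proof is correct and follows the paper's (Meshulam's) argument essentially verbatim. The space $Q$ is the set of linearized polynomials $\sum_{i=0}^{n-s-1} a_i x^{q^i}$ with $a_i \in \F_{q^n}$. The rank bound comes from counting the roots of a non-zero polynomial of degree at most $q^{n-s-1}$. The dimension comes from injectivity of the coefficient map, which the paper proves by the same root-counting argument you give as your alternative, not by Artin's theorem.

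One slip is in your closing aside, which lies outside this statement. To get an $m\times n$ space of dimension $(m-r)n$ whose non-zero elements all have rank greater than $r$ after deleting $n-m$ rows, you must take $s=n-m+r$, not $s=r$, as in \cref{cor:roth rectangular}.
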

Note that the case $s=n-1$ is precisely \cref{lem:algebraically open}. Although \cref{thm:roth} is stated only for $n\times n$ matrices, it gives as a simple corollary an analogous result for rectangular matrices.
\begin{corollary}\label{cor:roth rectangular}
	If $F$ is a finite field and $r \leq m \leq n$ are integers, there is a matrix space $Q^* \leq F^{m\times n}$ of dimension $\dim Q^*= (m-r)n$, all of whose non-zero elements have rank greater than $r$.
\end{corollary}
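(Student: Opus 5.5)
Apply \cref{thm:roth} with $n\times n$ matrices and a suitable $s$, then restrict to $m$ rows. Take $s = n-m+r$, so that $0\le r\le s\le n$ (using $r\le m\le n$) and $n-s=m-r$. \cref{thm:roth} gives a space $Q\le F^{n\times n}$ with $\dim Q=(m-r)n$ whose non-zero elements all have rank greater than $n-m+r$.

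Now define the linear map $\pi:F^{n\times n}\to F^{m\times n}$ that keeps the first $m$ rows and deletes the last $n-m$ rows, and set $Q^*=\pi(Q)$. The key observation is the following. Deleting $n-m$ rows from a matrix $A$ lowers its rank by at most $n-m$, because the row space of $A$ is spanned by the row space of $\pi(A)$ together with the $n-m$ deleted rows. Hence
\[
	\rk \pi(A)\ \ge\ \rk A-(n-m).
\]
So if $0\ne A\in Q$, then $\rk A\ge n-m+r+1$, and therefore $\rk\pi(A)\ge r+1>r$. In particular $\pi(A)\neq 0$. This shows that $\pi$ is injective on $Q$, so $\dim Q^*=\dim Q=(m-r)n$. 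It also shows that every non-zero element of $Q^*$, being of the form $\pi(A)$ with $0\ne A\in Q$, has rank greater than $r$.

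The only points needing care are checking that the chosen $s$ lies in the allowed range $[0,n]$ required by \cref{thm:roth}, and verifying injectivity; both follow directly from the rank inequality above. I do not expect any genuine obstacle, since the whole argument is this rank-drop bound under row deletion combined with the parameter choice $s=n-m+r$.
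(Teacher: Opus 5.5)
Your proposal is correct and follows essentially the same route as the paper: apply \cref{thm:roth} with $s=n-m+r$, project onto the first $m$ rows, and use the fact that deleting $n-m$ rows lowers the rank by at most $n-m$. The only difference is that you derive both injectivity and the rank bound from that single inequality, whereas the paper checks $Q\cap\ker\pi=\{0\}$ separately; this is a small streamlining, not a different argument.
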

\begin{proof}
	Let $Q\leq F^{n\times n}$ be the space given by \cref{thm:roth}, applied with parameter $s=n-m+r$, so that $\dim Q = (n-s)n=(m-r)n$. Let $Q^*$ be obtained from $Q$ by deleting the last $n-m$ rows of each matrix, to obtain a matrix space in $F^{m\times n}$. More formally, letting $\pi:F^{n\times n} \to F^{m\times n}$ be the projection sending each matrix to the submatrix given by the first $m$ rows, we define $Q^* = \pi(Q)$. By the rank-nullity theorem, we thus have
	\[
		\dim Q^* = \dim Q - \dim(Q \cap \ker \pi).
	\]
	We claim that $Q \cap \ker \pi = \{0\}$. Indeed, if $M$ is a non-zero matrix in $Q \cap \ker \pi$, then $M$ must have rank greater than $s\geq n-m$ (by the definition of $Q$), and must also be supported entirely in the last $n-m$ rows (by the definition of $\pi$), and these two properties are incompatible. Therefore $\dim(Q\cap \ker\pi)=0$, and we find that $\dim Q^*=\dim Q = (m-r)n$. 

	Finally, we claim that every non-zero matrix in $Q^*$ has rank greater than $r$. Indeed, fix some non-zero matrix $A \in Q^*$, and fix some $M \in Q$ with $\pi(M)=A$. As $M \in Q$, we know that $\rk M > s$, hence there is a collection of more than $s$ linearly independent rows in $M$. Of these, at most $n-m$ are in the last $n-m$ rows, hence $A$ must have a collection of more than $s-(n-m)=r$ linearly independent rows, implying that $\rk A >r$, as claimed. 
\end{proof}
Given these ingredients, the proof of \cref{thm:flanders} follows immediately.
\begin{proof}[Proof of \cref{thm:flanders}]
	By \cref{thm:general reduction}, it suffices to prove the statement for finite fields $F$. We already saw that the space $R^*\leq F^{m\times n}$ of all matrices supported in the first $r$ rows has dimension $rn$ and all of its elements have rank at most $r$. By \cref{cor:roth rectangular}, we also have some $Q^* \leq F^{m\times n}$ of dimension $(m-r)n$, all of whose non-zero elements have rank greater than $r$. Since $\dim R^* + \dim Q^*=mn$, we conclude that both of these spaces must be of maximum dimension under these constraints, as desired. 
\end{proof}

For completeness, let us also see a proof of \cref{thm:roth}, due to Meshulam \cite{MR1324051}.
\begin{proof}[Proof of \cref{thm:roth}]
	Let $F=\F_q$ be a finite field, and let $k=n-s$. If $s=n$ we may take $Q=\{0\}$, so we assume henceforth that $k>0$. Consider the set $Q$ of all polynomials of the form
	\[
		f(x) = \sum_{i=0}^{k-1} a_i x^{q^i},
	\]
	where $a_i \in \F_{q^n}$, and where we view the variable $x$ as also living in the extension field $\F_{q^n}$. The first observation is that each such $f \in Q$ is an $\F_q$-linear map $\F_{q^n} \to \F_{q^n}$, since $(x+y)^q = x^q + y^q$ for every $x,y\in \F_{q^n}$. Hence we may view $Q$ as a subset of $\End_{\F_q}(\F_{q^n}) \cong \F_q^{n\times n}$. 
	The second observation is that every non-zero $f$ in $Q$ is a polynomial of degree at most $q^{k-1}$, hence has at most $q^{k-1}$ roots in $\F_{q^n}$. This means that, viewed as a linear function, its kernel has dimension at most $k-1<k$. By the rank-nullity theorem, this means that every non-zero element of $Q$ has rank greater than $n-k=s$. It also implies that the linear map sending a coefficient vector $(a_0,\dots,a_{k-1})$ to the corresponding element $\sum a_i x^{q^i}$ of $Q$ is injective, and thus that distinct coefficient vectors $(a_0,\dots,a_{k-1})$ yield distinct elements of $Q$. The final observation is that any $\F_q$-linear combination of two $f,f' \in Q$ is another element of $Q$, hence $Q$ is actually a subspace of $\F_q^{n\times n}$. We have $q^n$ choices for each of the $k$ coefficients $a_i$, hence $\ab Q = (q^n)^k$, which in turn implies that $\dim Q = kn=(n-s)n$, as desired. 
\end{proof}

Just as we proved \cref{lem:algebraically open} for all algebraically open fields, and not only for finite fields, we could also extend this proof to cover a wider class of fields. Namely, let us call a field $F$ \emph{cyclically open} if for all positive integers $n$, there is a cyclic field extension $K\supset F$ of degree $n$. Then one can check that (essentially) the same proof works to show that if $F$ is cyclically open, then there is $Q \leq F^{n\times n}$ of dimension $\dim Q = (n-s)n$, all of whose non-zero elements have rank greater than $s$. Rather than working with the polynomials $f$ as defined above, instead one works with (formal) polynomials in the variable $\sigma$, where $\sigma$ is a generator of the Galois group $\Gal(K/F)$, which is assumed to be cyclic of order $n$. Each such polynomial yields an $F$-linear map $K \to K$, and one can verify that the kernel of this map has dimension strictly less than the degree of the polynomial, which was the key property we used in the proof of \cref{thm:roth}.

As with algebraically open fields, there are numerous examples of cyclically open fields that are not finite, such as $\Q$ and all number fields \cite[Theorem X.6]{MR2467155}. In fact, we are not aware of any example of an algebraically open field that is not cyclically open (although we believe that such examples must surely exist).

\paragraph{Acknowledgments:}
I am indebted to Vivian Kuperberg, Youming Qiao, and Avi Wigderson for enlightening discussions on this topic, and to the anonymous referees for helpful comments on an earlier version of this article.

\paragraph{AI disclosure statement}
	ChatGPT 5.5 was used during the final revision of this manuscript to check for typos. All of the ideas and writing in this paper are due entirely to the author.


\end{document}